\def\version{January 23, 2013}
\newcommand{\notyet}[1]{}
\DeclareSymbolFont{AMSb}{U}{msb}{m}{n}
\DeclareSymbolFontAlphabet{\mathbb}{AMSb}
\newcommand{\be}{\begin{equation}}
\newcommand{\ee}{\end{equation}}
\newcommand{\beqn}{\begin{eqnarray}}
\newcommand{\eeqn}{\end{eqnarray}}
\newcommand{\ba}{\begin{array}}
\newcommand{\ea}{\end{array}}
\newcommand{\supp}{\mathop{\rm supp}}
\newcommand{\at}[1]{\vert\sb{\sb{#1}}}
\newcommand{\At}[1]{\Big\vert\sb{\sb{#1}}}
\providecommand{\C}{\mathbb{C}}
\renewcommand{\C}{\mathbb{C}}
\newcommand{\R}{\mathbb{R}}
\newcommand{\N}{\mathbb{N}}
\newcommand{\Z}{\mathbb{Z}}
\newcommand{\T}{\mathbb{T}}
\newcommand{\abs}[1]{\vert #1 \vert}
\newcommand{\sothat}{{\rm :}\ }
\providecommand{\ltor}[1]{
\ifnum #1=1{\it i}\else\ifnum #1=2{\it ii}\else\ifnum #1=3{\it iii}
\else\ifnum #1=4 {\it iv}\fi\fi\fi\fi
}
\DeclareMathSymbol{\varGamma}{\mathord}{letters}{"00}
\DeclareMathSymbol{\varDelta}{\mathord}{letters}{"01}
\DeclareMathSymbol{\varSigma}{\mathord}{letters}{"06}
\DeclareMathSymbol{\varPhi}{\mathord}{letters}{"08}
\DeclareMathSymbol{\varOmega}{\mathord}{letters}{"0A}
\theoremstyle{plain}
\newtheorem{theorem}{Theorem}
\newtheorem{corollary}{Corollary}
\newtheorem{lemma}{Lemma}
\theoremstyle{definition}
\newtheorem{remark}{Remark}
\makeatletter\@addtoreset{equation}{section}
\begin{document}

\title{
On the Titchmarsh convolution theorem
for distributions on the circle}

\author{
\\
{\sc Andrew Comech}
\\
{\it\small
Texas A\&M University,
College Station, TX 77843, USA}
\\
{\it\small
Institute for Information Transmission Problems,
Moscow 101447, Russia}
\\
{\sc Alexander Komech}
\\
{\it\small
University of Vienna, Wien A-1090, 
Austria}
\\
{\it\small
Institute for Information Transmission Problems,
Moscow 101447, Russia}
}

\date{\version}

\maketitle

\begin{abstract}
We prove a version of the
Titchmarsh convolution theorem
for distributions on the circle.
We show that the ``na\"ive form''
of the Titchmarsh theorem could be violated,
but that such a violation
is only possible for the convolution
of distributions
which both possess certain symmetry properties.

\bigskip

{\sc Keywords:}
Titchmarsh convolution theorem,
periodic distributions.

\smallskip

{\sc 2000 Mathematics Subject Classification:} 42A85
(Convolution, factorization)
\end{abstract}


\section{Introduction}

The Titchmarsh convolution theorem \cite{titchmarsh}
states that
for any two compactly supported distributions
$f,\,g\in\mathscr{E}'(\R)$,
\begin{equation}\label{titchmarsh-theorem}
\inf\supp f\ast g
=\inf\supp f+\inf\supp g,
\qquad
\sup\supp f\ast g
=\sup\supp f+\sup\supp g.
\end{equation}
The higher-dimensional reformulation
by Lions
\cite{MR0043254}
states
that
for $f,\,g\in\mathscr{E}'(\R^n)$,
the convex hull of the support
of $f\ast g$
is equal to the sum of convex hulls of
supports of $f$ and $g$.
Different proofs of the Titchmarsh convolution theorem
are contained in
\cite[Chapter VI]{MR617913} (Real Analysis style),
\cite[Theorem 4.3.3]{MR1065136} (Harmonic Analysis style),
and \cite[Lecture 16, Theorem 5]{MR1400006} (Complex Analysis style).

In this note,
we generalize the Titchmarsh Theorem
to periodic distributions,
which we consider as
distributions on the circle,
or, more precisely, on the torus $\T:=\R\slash 2\pi\Z$.

First,
we note that there are zero divisors with respect
to the convolution on a circle.
Indeed, for any two distributions $f$, $g\in\mathscr{E}'(\T)$
one has
\begin{equation}\label{nt}
(f+S\sb{\pi}f)\ast(g-S\sb{\pi}g)
=f\ast g+S\sb{\pi}(f\ast g)-S\sb{\pi}(f\ast g)
-f\ast g=0.
\end{equation}
Above, $S\sb{y}$, $y\in\T$,
is the shift operator, defined on $\mathscr{E}'(\T)$ by
\begin{equation}\label{def-shift}
\big(S\sb{y}f\big)(\omega)=f(\omega-y),
\end{equation}
where the above relation
is understood
in the sense of distributions.
Yet, the cases when the Titchmarsh convolution theorem
``does not hold'' (in some na\"ive form)
could be specified.
This leads to a version of the Titchmarsh convolution theorem
for distributions on a circle
(Theorem~\ref{theorem-titchmarsh-circle} below).

\medskip





Our interest
in properties of a convolution on a circle
is due to applications
to the theory of attractors
for finite difference approximations
of nonlinear dispersive equations.
In
\cite{ubk-arma},
we considered the weak attractor
of finite energy solutions
to the $\mathbf{U}(1)$-invariant Klein-Gordon equation
in 1D,
coupled to a nonlinear oscillator.
We proved that
the global attractor
of all finite energy solutions
is formed by the set of all solitary waves,
$\phi\sb\omega(x) e^{-i\omega t}$
with
$\omega\in\R$ and $\phi\sb\omega\in H\sp 1(\R)$.
The general strategy
of the proof was to consider
the omega-limit trajectories of the finite energy solution $\psi(x,t)\in\C$,
defined as solutions
with the Cauchy data
at the omega-limit points of the set
$\{(\psi(t),\dot\psi(t))\sothat t\ge 0\}$
in the local energy seminorms.
One shows that the time spectrum
of each omega-limit trajectory
is inside the spectral gap
and then,
applying the Titchmarsh convolution theorem
to the equation satisfied by the omega-limit trajectory,
one concludes that
its time spectrum
consists of at most a single frequency,
hence any omega-limit trajectory
is a solitary wave (or zero).
For the Klein-Gordon equation
in discrete space-time \cite{MR0503140},
this approach was adapted in \cite{kg-discrete}.
The main difference is that
now the frequency domain is a circle
(no longer the whole real line)
and there are not one, but two spectral gaps in the continuous
spectrum.
Thus,
to analyze the time spectrum
of the omega-limit trajectory,
one needs a version of the Titchmarsh convolution
theorem for distributions supported inside two intervals of the circle.



\bigskip





ACKNOWLEDGMENTS.
The authors are grateful to E.A. Gorin
for his interest and support.

\section{Main results}

For $I\subset\T$
and
$n\in\N$,
denote
\[
\mathscr{R}_n(I)
=\bigcup\limits\sb{k\in\Z\sb n}S\sb{\frac{2\pi k}{n}}I,
\qquad
\mbox{where}
\quad
\Z\sb n=\Z\mod n.
\]
Let $f,\,g\in\mathscr{E}'(\T)$.
Let
$I,\,J\subset\T$ be two closed intervals 
such that
$
\supp f\subset \mathscr{R}_n(I),
$
$
\supp g\subset \mathscr{R}_n(J),
$
and assume that
there is no closed interval
$I'\subsetneq I$
such that
$\supp f\subset \mathscr{R}_n(I')$
and no closed interval $J'\subsetneq J$
such that
$\supp g\subset \mathscr{R}_n(J')$.

\begin{remark}
For $f,\,g\in\mathscr{E}'(\T)$,
the intervals $I$ and $J$
play the role similar to
``convex hulls''
of supports.
\end{remark}

\begin{theorem}[Titchmarsh theorem for distributions on a circle]
\label{theorem-titchmarsh-circle}
Let $n\in\N$, $n\ge 2$.
Assume that
\begin{equation}\label{i-j-small}
\abs{I}+\abs{J}<\frac{2\pi}{n}.
\end{equation}
Let $K\subset I+J\subset\T$
be a closed interval such that
$
\supp f\ast g\subset \mathscr{R}_n(K).
$
If $\lambda:=\inf K-\inf I-\inf J>0$,
then there are
$\alpha,\,\beta\in\C$, $\alpha^n=\beta^n=1$,
$\alpha\ne\beta$,
such that
\begin{equation}\label{asbs1}
\ \quad
\Big(\sum\sb{k\in\Z\sb n}\alpha^k S\sb{\frac{2\pi k}{n}}f\Big)
\At{(\sup I-\frac{2\pi}{n},\inf I+\lambda)}=0,
\qquad
\inf\supp
\Big(\sum\sb{k\in\Z\sb n}\alpha^k S\sb{\frac{2\pi k}{n}}g\Big)
\At{(\sup J-\frac{2\pi}{n},\inf J+\lambda)}=\inf J,
\end{equation}
\begin{equation}\label{asbs2}
\hskip -1cm
\inf\supp
\Big(\sum\sb{k\in\Z\sb n}\beta^k S\sb{\frac{2\pi k}{n}}f\Big)
\At{(\sup I-\frac{2\pi}{n},\inf I+\lambda)}=\inf I,
\qquad\quad
\Big(\sum\sb{k\in\Z\sb n}\beta^k S\sb{\frac{2\pi k}{n}}g\Big)
\At{(\sup J-\frac{2\pi}{n},\inf J+\lambda)}=0.
\end{equation}
\end{theorem}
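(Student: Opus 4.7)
The plan is to decouple the hypothesis along the $\Z_n$-action on $\T$ generated by $T:=S_{2\pi/n}$ (which satisfies $T^n=\mathrm{id}$), and then reduce to the classical Titchmarsh theorem on $\R$. First I would decompose
\[
f=\sum_{k\in\Z_n}T^k\tilde f_k,\qquad g=\sum_{l\in\Z_n}T^l\tilde g_l,
\]
where $\tilde f_k\in\mathscr{E}'(\R)$ is the lift to $I\subset\R$ of $f\vert_{T^kI}\in\mathscr{E}'(\T)$ shifted back to $I$, and analogously for $\tilde g_l$. The hypothesis $|I|+|J|<2\pi/n$ makes the translates $T^s(I+J)$, $s\in\Z_n$, pairwise disjoint on $\T$ and keeps all relevant pieces inside arcs of length $<2\pi$, so that $\T$- and $\R$-convolutions coincide. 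A direct computation using $T^k\tilde f_k\ast T^l\tilde g_l=T^{k+l}(\tilde f_k\ast_\R\tilde g_l)$ should then yield
\[
(f\ast g)\big\vert_{T^s(I+J)}=T^sh_s,\qquad h_s:=\!\!\!\sum_{\substack{k,l\in\Z_n\\k+l\equiv s\,(\mathrm{mod}\,n)}}\!\!\!\tilde f_k\ast_\R\tilde g_l\ \in\mathscr{E}'(\R).
\]
Comparing with $\supp f\ast g\subset\mathscr{R}_n(K)=\bigcup_s T^sK$ (a disjoint union since $|K|<2\pi/n$) will force $\supp h_s\subset K$ for every $s$, so each $h_s$ vanishes on $(\inf I+\inf J,\ \inf I+\inf J+\lambda)$.

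Next I would perform a $\Z_n$-Fourier inversion. For each $n$-th root of unity $\alpha$ set
\[
U_\alpha:=\sum_k\alpha^{-k}\tilde f_k\in\mathscr{E}'(\R),\qquad V_\alpha:=\sum_l\alpha^{-l}\tilde g_l\in\mathscr{E}'(\R),
\]
supported in $I$, $J$ respectively; these coincide with the $\R$-lifts to $I$, $J$ of the restrictions of the paper's combinations $\sum_k\alpha^kS_{2\pi k/n}f$, $\sum_k\alpha^kS_{2\pi k/n}g$ to $I$, $J$. A direct expansion yields $U_\alpha\ast V_\alpha=\sum_s\alpha^{-s}h_s$, so the vanishing of every $h_s$ on the interval above propagates to $U_\alpha\ast V_\alpha$ for \emph{every} $\alpha$. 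The classical Titchmarsh theorem on $\R$, applied to each pair, then gives
\[
a_\alpha+b_\alpha\ \ge\ \inf I+\inf J+\lambda\quad\text{for every }\alpha,
\]
where $a_\alpha:=\inf\supp U_\alpha$, $b_\alpha:=\inf\supp V_\alpha$ (with the convention $+\infty$ when the distribution vanishes).

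To finish, I would invoke the minimality of $I,J$. The inverse relations $\tilde f_k=\frac{1}{n}\sum_\alpha\alpha^kU_\alpha$ and its $g$-analogue imply $\bigcup_k\supp\tilde f_k=\bigcup_\alpha\supp U_\alpha$, so the minimality of $I$ forces $\min_\alpha a_\alpha=\min_k\inf\supp\tilde f_k=\inf I$; hence some $\beta$ has $a_\beta=\inf I$, and the previous inequality then gives $b_\beta\ge\inf J+\lambda$. Symmetrically, some $\alpha$ satisfies $b_\alpha=\inf J$ and $a_\alpha\ge\inf I+\lambda$. These indices must differ---otherwise $a+b=\inf I+\inf J$ would violate the strict inequality since $\lambda>0$. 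The identities (\ref{asbs1})--(\ref{asbs2}) then follow from $(\sum_k\alpha^kS_{2\pi k/n}f)\vert_I=U_\alpha$, using that $\sum_k\alpha^kS_{2\pi k/n}f$ automatically vanishes on the gap $(\sup I-2\pi/n,\inf I)\subset\T\setminus\mathscr{R}_n(I)$. I expect the main obstacle to be the decoupling formula in the first paragraph: once the single hypothesis is replaced by $n$ independent vanishing conditions on $\R$, the $\Z_n$-Fourier inversion together with the classical Titchmarsh theorem dispose of the rest almost immediately.
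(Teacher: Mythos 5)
Your proposal is correct and follows essentially the same route as the paper: decompose $f$, $g$, $f\ast g$ into their pieces on the disjoint translates of $I$, $J$, $K$, form the root-of-unity combinations so that the circle convolution identity diagonalizes into $U_\alpha\ast V_\alpha=\sum_s\alpha^{-s}h_s$, apply the classical Titchmarsh theorem on $\R$, and pick out the roots of unity realizing $\inf I$ and $\inf J$. Your inversion-formula argument for $\min_\alpha a_\alpha=\min_k\inf\supp\tilde f_k$ is exactly the paper's Lemma~\ref{lemma-ssj}, proved there via the nonvanishing of the Vandermonde determinant, which is the same fact in different clothing.
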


\begin{remark}\label{remark-th1}
The relations \eqref{asbs2} follow
from \eqref{asbs1}
due to the symmetric role of $f$ and $g$.
The conclusion $\alpha\ne\beta$
follows from
comparing \eqref{asbs1} and \eqref{asbs2}.
Indeed, the first relation in \eqref{asbs1} implies that
$\inf\supp\Big(\sum\sb{k\in\Z\sb n}\alpha^k
S\sb{\frac{2\pi k}{n}}f\Big)\at{I}\ge\inf I+\lambda>\inf I$,
which would contradict
the first relation in \eqref{asbs2}
if we had $\alpha=\beta$.
\end{remark}


Applying the reflection to $\T$, we also
get the following result:

\begin{corollary}\label{corollary-easy}
If $\rho:=\sup I+\sup J-\sup K>0$,
then there are
$\alpha,\,\beta\in\C$,
$\alpha^n=\beta^n=1$,
$\alpha\ne\beta$,
such that
\begin{equation}\label{asbs3}
\Big(\sum\sb{k\in\Z\sb n}\alpha^k S\sb{\frac{2\pi k}{n}}f\Big)
\At{(\sup I-\rho,\inf I+\frac{2\pi}{n})}=0,
\qquad\quad
\sup\supp\Big(\sum\sb{k\in\Z\sb n}\alpha^k S\sb{\frac{2\pi k}{n}}g\Big)
\At{(\sup J-\rho,\inf J+\frac{2\pi}{n})}=\sup J,
\end{equation}
\begin{equation}\label{asbs4}
\hskip -1cm
\sup\supp\Big(\sum\sb{k\in\Z\sb n}\beta^k S\sb{\frac{2\pi k}{n}}f\Big)
\At{(\sup I-\rho,\inf I+\frac{2\pi}{n})}=\sup I,
\qquad
\Big(\sum\sb{k\in\Z\sb n}\beta^k S\sb{\frac{2\pi k}{n}}g\Big)
\At{(\sup J-\rho,\inf J+\frac{2\pi}{n})}=0.
\end{equation}
\end{corollary}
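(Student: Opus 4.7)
The plan is to deduce the corollary from Theorem~\ref{theorem-titchmarsh-circle} by applying the reflection $R\colon\T\to\T$, $\omega\mapsto-\omega$, which acts on $\mathscr{E}'(\T)$ by $(Rf)(\omega)=f(-\omega)$ in the sense of distributions. First I would record the elementary compatibilities of $R$ with the structures in the statement: $R$ interchanges the infimum and supremum of supports, satisfies $RS_{y}=S_{-y}R$, and is an involutive algebra homomorphism for the convolution, $R(f\ast g)=(Rf)\ast(Rg)$. Consequently, for a closed interval $I\subset\T$ one has $RI=[-\sup I,-\inf I]$, $|RI|=|I|$, and $R(\mathscr{R}_n(I))=\mathscr{R}_n(RI)$, the latter because $\{-\tfrac{2\pi k}{n}\}_{k\in\Z_n}=\{\tfrac{2\pi k}{n}\}_{k\in\Z_n}$.

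Next I would apply Theorem~\ref{theorem-titchmarsh-circle} to the reflected data $\tilde f:=Rf$, $\tilde g:=Rg$, $\tilde I:=RI$, $\tilde J:=RJ$, $\tilde K:=RK$. The smallness hypothesis $|I|+|J|<2\pi/n$ is preserved, the inclusion $\tilde K\subset\tilde I+\tilde J$ follows from $R(I+J)=RI+RJ$, and the minimality of $I$ and $J$ transfers to $\tilde I$ and $\tilde J$ by the same considerations. The arithmetic input that makes the reduction work is
\[
\tilde\lambda:=\inf\tilde K-\inf\tilde I-\inf\tilde J=-\sup K+\sup I+\sup J=\rho,
\]
so the hypothesis $\rho>0$ of the corollary is exactly the hypothesis $\tilde\lambda>0$ needed to invoke the theorem. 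The theorem then produces $n$-th roots of unity $\tilde\alpha\ne\tilde\beta$ satisfying \eqref{asbs1}--\eqref{asbs2} with $f,g,I,J,\lambda$ replaced by $\tilde f,\tilde g,\tilde I,\tilde J,\tilde\lambda$.

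The final step is to pull the conclusions back through $R$. Using $RS_{2\pi k/n}=S_{-2\pi k/n}R$ and reindexing $k\mapsto -k$ modulo $n$ (legitimate since $\tilde\alpha^n=1$), one obtains $\sum_{k\in\Z_n}\tilde\alpha^{k}S_{2\pi k/n}\tilde f=R\bigl(\sum_{k\in\Z_n}\tilde\alpha^{-k}S_{2\pi k/n}f\bigr)$, and analogously for $g$; setting $\alpha:=\tilde\alpha^{-1}$, $\beta:=\tilde\beta^{-1}$ yields $n$-th roots of unity with $\alpha\ne\beta$. Since $R$ swaps $\inf\supp$ with $\sup\supp$ and sends the interval $(\sup\tilde I-2\pi/n,\inf\tilde I+\tilde\lambda)=(-\inf I-2\pi/n,-\sup I+\rho)$ to $(\sup I-\rho,\inf I+2\pi/n)$ (and similarly for $J$), the four relations \eqref{asbs1}--\eqref{asbs2} for the reflected data translate precisely to \eqref{asbs3}--\eqref{asbs4} for the original data.

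The only real pitfall I anticipate is bookkeeping with the signs of the shift indices and the endpoints of the intervals after reflection; so I would verify the transformation of one interval under $R$ explicitly before asserting the four-fold translation, after which everything is formal.
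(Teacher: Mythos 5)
Your proposal is correct and is exactly the paper's approach: the paper justifies the corollary with the single phrase ``Applying the reflection to $\T$,'' and your argument fills in precisely that reflection bookkeeping (the identities $RS_y=S_{-y}R$, $R(f\ast g)=Rf\ast Rg$, the computation $\tilde\lambda=\rho$, and the substitution $\alpha=\tilde\alpha^{-1}$), all of which checks out.
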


That is, if $K\subsetneq I+J$
(informally, we could say that
certain na\"ive form of the Titchmarsh convolution theorem is not satisfied),
then 
both $f$ and $g$ satisfy certain
symmetry properties
on $\mathscr{R}_n(U)$ and on $\mathscr{R}_n(V)$,
where open non-intersecting
intervals $U$ and $V$
can be chosen so that
$U\cup K\cup V\supset I+J$.


In the case $n=2$,
we have the following result.

\begin{corollary}\label{corollary-n2}
Let $n=2$,
$f$, $g\in\mathscr{E}'(\T)$,
and let
$I$, $J$, $K$
be as in Theorem~\ref{theorem-titchmarsh-circle}.
Then
$\lambda:=\inf K-\inf I-\inf J>0$
if and only if
there is $\alpha=\pm 1$
such that
\[
(f+\alpha S\sb\pi f)\at{(\sup I-\pi,\inf I+\lambda)}=0,
\qquad
(g-\alpha S\sb\pi g)\at{(\sup J-\pi,\inf J+\lambda)}=0.
\]
\end{corollary}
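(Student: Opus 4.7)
The plan is to handle the two implications separately.

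For the forward direction ($\lambda>0$ implies the stated vanishings), I would specialize Theorem~\ref{theorem-titchmarsh-circle} to $n=2$. The only pairs $\alpha,\beta\in\{\pm 1\}$ with $\alpha\ne\beta$ are $(1,-1)$ and $(-1,1)$, so $\beta=-\alpha$, and the sums in \eqref{asbs1}--\eqref{asbs2} reduce to $\mathrm{id}\pm\alpha S_\pi$. The first equality of \eqref{asbs1} together with the second equality of \eqref{asbs2} then give exactly the two asserted vanishings with the same parameter $\lambda$.

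For the converse the plan is to exploit the projection structure behind \eqref{nt}. Suppose we are given $\alpha\in\{\pm 1\}$ and $\mu>0$ such that $(f+\alpha S_\pi f)$ vanishes on $(\sup I-\pi,\inf I+\mu)$ and $(g-\alpha S_\pi g)$ vanishes on $(\sup J-\pi,\inf J+\mu)$; I will show $\inf K\ge\inf I+\inf J+\mu$, which forces $\lambda\ge\mu>0$. Because $\alpha^2=1$ and $S_\pi^2=\mathrm{id}$, the operators $P=\frac12(\mathrm{id}+\alpha S_\pi)$ and $Q=\frac12(\mathrm{id}-\alpha S_\pi)$ are complementary projections on $\mathscr{E}'(\T)$. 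The same cancellation that gives \eqref{nt} also yields $(Pu)\ast(Qv)=0$ for all $u,v\in\mathscr{E}'(\T)$, so writing $f=Pf+Qf$, $g=Pg+Qg$ reduces the convolution to
\[
f\ast g=Pf\ast Pg+Qf\ast Qg,
\]
and the hypotheses translate to $Pf=0$ on $(\sup I-\pi,\inf I+\mu)$ and $Qg=0$ on $(\sup J-\pi,\inf J+\mu)$.

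The heart of the argument is then a support-localization step. Since $S_\pi Pf=\alpha Pf$ the set $\supp Pf$ is $S_\pi$-invariant, so $Pf$ also vanishes on the $\pi$-translate of the given interval; combined with the a priori inclusion $\supp Pf\subset\mathscr{R}_2(I)$, and using $|I|<\pi$ from \eqref{i-j-small} to rule out wrap-around on $\T$, this produces $\supp Pf\subset\mathscr{R}_2([\inf I+\mu,\sup I])$, and analogously $\supp Qg\subset\mathscr{R}_2([\inf J+\mu,\sup J])$. Since $\supp Pg\subset\mathscr{R}_2(J)$ and $\supp Qf\subset\mathscr{R}_2(I)$ automatically, and $\mathscr{R}_2(A)+\mathscr{R}_2(B)=\mathscr{R}_2(A+B)$, both $Pf\ast Pg$ and $Qf\ast Qg$ are supported in $\mathscr{R}_2([\inf I+\inf J+\mu,\sup I+\sup J])$; hence so is $f\ast g$, giving $\inf K\ge\inf I+\inf J+\mu$ and the desired conclusion.

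The step I expect to require the most care is this support localization: one has to verify that $(\sup I-\pi,\inf I+\mu)$ together with its $\pi$-shift really excises the arcs $[\inf I,\inf I+\mu)$ and $[\inf I+\pi,\inf I+\pi+\mu)$ from the two components of $\mathscr{R}_2(I)$ without any spurious overlap on the circle, and this is precisely where the hypothesis $|I|+|J|<\pi$ (and hence $|I|,|J|<\pi$) is used in an essential way.
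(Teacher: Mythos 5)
Your proof is correct, and the forward direction is exactly the paper's (the paper simply cites Theorem~\ref{theorem-titchmarsh-circle}; your observation that $n=2$ forces $\beta=-\alpha$ and that the first relation of \eqref{asbs1} and the second of \eqref{asbs2} are the two asserted vanishings is the intended specialization). For the converse the underlying cancellation is the same identity \eqref{nt}, but you package it differently. The paper restricts $f\ast g$ to the single critical interval $(\sup I+\sup J-2\pi,\inf I+\inf J+\lambda)$, observes that only the terms $f\at{I}\ast g\at{J}$ and $(S\sb\pi f)\at{I}\ast(S\sb\pi g)\at{J}$ can contribute there, and cancels them using $f=\pm S\sb\pi f$, $g=\mp S\sb\pi g$ on the relevant sets. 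You instead introduce the complementary projections $P=\tfrac12(\mathrm{id}+\alpha S\sb\pi)$, $Q=\tfrac12(\mathrm{id}-\alpha S\sb\pi)$, use $(Pu)\ast(Qv)=0$ to get $f\ast g=Pf\ast Pg+Qf\ast Qg$, and then localize $\supp Pf$ and $\supp Qg$ using the $S\sb\pi$-invariance of their supports. Your route costs the extra support-localization lemma but buys a cleaner global statement, $\supp f\ast g\subset\mathscr{R}_2([\inf(I+J)+\mu,\,\sup(I+J)])$, and avoids the slightly informal step of restricting a convolution to an interval and discarding cross terms by hand; the verification you flag (that $(\sup I-\pi,\inf I+\mu)$ and its $\pi$-shift excise exactly $[\inf I,\inf I+\mu)$ and $[\inf I+\pi,\inf I+\pi+\mu)$ from $\mathscr{R}_2(I)$) does go through precisely because $\abs{I}<\pi$. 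One shared caveat, present equally in the paper's own proof: the ``if'' direction only yields $\inf K\ge\inf I+\inf J+\mu$ for the \emph{minimal} admissible $K$, so the corollary must be read with $K$ minimal; your phrasing ``which forces $\lambda\ge\mu>0$'' silently makes that choice, as does the paper.
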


\begin{proof}[Proof of Corollary~\ref{corollary-n2}]
The ``only if'' part follows
from Theorem~\ref{theorem-titchmarsh-circle}.
We check the ``if'' part
by direct computation.
Let
$f\in\mathscr{E}'(I\cup S\sb{\pi}I)$,
where $I\subset\T$, $\abs{I}<\pi/2$,
$g\in\mathscr{E}'(J\cup S\sb{\pi}J)$,
where $J\subset\T$, $\abs{J}<\pi/2$,
and assume that
$f=\pm S\sb\pi f$ on $(\sup I-\pi,\inf I+\lambda)$,
$g=\mp S\sb\pi g$ on $(\sup J-\pi,\inf J+\lambda)$.
Then, as in \eqref{nt},
\begin{eqnarray}
&&
(f\ast g)\at{(\sup I+\sup J-2\pi,\inf I+\inf J+\lambda)}
\nonumber
\\
&&
=f\at{(\sup I-\pi,\inf I+\lambda)}
\ast g\at{(\sup J-\pi,\inf J+\lambda)}
+(S\sb\pi f)\at{(\sup I-\pi,\inf I+\lambda)}
\ast(S\sb\pi g)\at{(\sup J-\pi,\inf J+\lambda)}
\nonumber
\\
&&
=f\at{(\sup I-\pi,\inf I+\lambda)}
\ast g\at{(\sup J-\pi,\inf J+\lambda)}
-f\at{(\sup I-\pi,\inf I+\lambda)}
\ast g\at{(\sup J-\pi,\inf J+\lambda)}=0.
\nonumber
\end{eqnarray}
\end{proof}

Define
$f\sp\sharp(\omega)=f(-\omega)$.
Let $f\in\mathscr{E}'(\T)$
and let
$I\subset\T$
be a closed interval such that
$\supp f\subset \mathscr{R}_2(I)$.
Assume that
there is no
closed interval $I'\subsetneq I$ such that
$\supp f\subset \mathscr{R}_2(I')$.

\begin{theorem}\label{theorem-titchmarsh-weird}
If
$I\subset(-\pi/2,\pi/2)$
and
$\abs{I}<\pi/2$,
then
the inclusion
$
\supp f\ast f\sp\sharp\subset\{0;\pi\}
$
implies that
$
\supp f\subset\{\inf I;\sup I;\pi+\inf I;\pi+\sup I\}.
$
Moreover, there are
distributions $\mu$, $\nu\in\mathscr{E}'(\T)$,
each supported at a point,
such that
\begin{equation}\label{fab1}
f=\mu+S\sb{\pi}\mu+\nu-S\sb{\pi}\nu.
\end{equation}
\end{theorem}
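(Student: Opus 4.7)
The plan is to apply Theorem~\ref{theorem-titchmarsh-circle} with $n=2$ and $g=f^\sharp$, the hypothesis $\supp f\ast f^\sharp \subset \{0,\pi\}$ being precisely what allows us to choose $K=\{0\}$. Writing $I=[a,b]$, we have $\supp f^\sharp \subset \mathscr{R}_2(-I)$ with $J:=-I=[-b,-a]$ minimal for $f^\sharp$; the hypothesis $|I|<\pi/2$ yields $|I|+|J|=2|I|<\pi$, and $K=\{0\}\subset I+J$ together with $\{0,\pi\}=\mathscr{R}_2(\{0\})$ satisfies the hypotheses of the theorem. I may assume $|I|>0$, since otherwise $\supp f\subset\{a,a+\pi\}$ and the conclusions follow directly. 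One computes $\lambda=0-a-(-b)=|I|>0$, so the theorem produces signs $\alpha,\beta\in\{\pm 1\}$ with $\alpha\ne\beta$ such that
\[
(f+\alpha S_\pi f)\big|_{(b-\pi,\,b)}=0,\qquad (f^\sharp+\beta S_\pi f^\sharp)\big|_{(-a-\pi,\,-a)}=0.
\]
Since $S_\pi$ commutes with $\sharp$, the second relation becomes $(f+\beta S_\pi f)|_{(a,\,a+\pi)}=0$ after applying $\sharp$.

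Next, the quantitative assumptions $I\subset(-\pi/2,\pi/2)$ and $|I|<\pi/2$ ensure that the open arcs $(b-\pi,b)$ and $(a,a+\pi)$ are each of length $\pi$, are disjoint from $S_\pi I$, and intersect $I$ exactly in $[a,b)$ and $(a,b]$ respectively. Combining this with the inclusion $\supp(f+\alpha S_\pi f)\subset I\cup S_\pi I$ and the $S_\pi$-invariance of this support (which follows from $\alpha^2=1$), a short set-theoretic intersection gives
\[
\supp(f+\alpha S_\pi f)\subset\{b,\,b+\pi\},\qquad \supp(f+\beta S_\pi f)\subset\{a,\,a+\pi\}.
\]

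Finally, since $\{\alpha,\beta\}=\{+1,-1\}$, one of the projections $f_+:=\frac{1}{2}(f+S_\pi f)$, $f_-:=\frac{1}{2}(f-S_\pi f)$ is supported in $\{b,b+\pi\}$ and the other in $\{a,a+\pi\}$. Defining $\mu$ (respectively $\nu$) as the restriction of $f_+$ (respectively $f_-$) to one of the two points of its support, the symmetries $S_\pi f_+=f_+$ and $S_\pi f_-=-f_-$ yield $f_+=\mu+S_\pi\mu$ and $f_-=\nu-S_\pi\nu$. Summing gives the decomposition \eqref{fab1} together with the support inclusion $\supp f\subset\{a,b,a+\pi,b+\pi\}=\{\inf I,\sup I,\pi+\inf I,\pi+\sup I\}$.

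The only nontrivial checks are the precise intersection computations of the length-$\pi$ open arcs with $I\cup S_\pi I$ (which use the quantitative bounds $I\subset(-\pi/2,\pi/2)$ and $|I|<\pi/2$ crucially) and the bookkeeping of the sign assignment in the final step; neither involves any idea beyond Theorem~\ref{theorem-titchmarsh-circle}.
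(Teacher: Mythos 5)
Your proof is correct and follows essentially the same route as the paper: apply Theorem~\ref{theorem-titchmarsh-circle} with $g=f^\sharp$, $J=-I$, $K=\{0\}$, get $\lambda=\abs{I}>0$ and two vanishing relations with distinct signs $\alpha\ne\beta$, then read off the support and the decomposition \eqref{fab1}. The only (cosmetic) differences are that the paper concludes $\supp f\subset\{\inf I,\sup I,\pi+\inf I,\pi+\sup I\}$ by adding and subtracting the two vanishing relations on the overlap $(\inf I,\sup I)$ rather than via your $S_\pi$-invariance-of-support argument, and it defines $\mu,\nu$ as restrictions of $f$ itself to half-circles rather than restrictions of the projections $f_\pm$ to single points; both variants are equivalent.
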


\begin{remark}
The statement of Theorem~\ref{theorem-titchmarsh-weird}
remains true if one defines
$f\sp\sharp(\omega)=\overline{f(-\omega)}$
(the form used in \cite{kg-discrete}).
This change does not affect the proof.
\end{remark}

Finally, let us also formulate the
convolution theorem for powers of a distribution.
Let $f\in\mathscr{E}'(\T)$.
Let $I\subset\T$ be a closed interval
such that $\supp f\subset \mathscr{R}_n(I)$,
and assume that there is no $I'\subsetneq I$
such that
$\supp f\subset \mathscr{R}_n(I')$.

\begin{theorem}[Titchmarsh theorem for powers
of a distribution on a circle]
\label{theorem-titchmarsh-powers}
Assume that $\abs{I}<\frac{2\pi}{pn}$, for some $p\in\N$.
Then the smallest closed interval
$K\subset p I$ such that
$\supp f\sp{\ast p}\subset \mathscr{R}_n(K)$
is $K=p I$.
\end{theorem}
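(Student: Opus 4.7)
The plan is induction on $p$; the base case $p=1$ is tautological. For the inductive step, I would write $f^{\ast p}=f\ast f^{\ast(p-1)}$ and apply Theorem~\ref{theorem-titchmarsh-circle} with $g:=f^{\ast(p-1)}$ and $J:=(p-1)I$. Since $|I|<2\pi/(pn)<2\pi/((p-1)n)$, the inductive hypothesis at level $p-1$ guarantees that $(p-1)I$ is the minimal interval for $g$, and $|I|+|J|=p|I|<2\pi/n$ is precisely \eqref{i-j-small}.

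Suppose, for contradiction, that the minimal $K\subsetneq pI$. At least one of $\lambda:=\inf K-\inf I-\inf J$ and $\rho:=\sup I+\sup J-\sup K$ is positive; using Corollary~\ref{corollary-easy} for the other case, we may assume $\lambda>0$. Theorem~\ref{theorem-titchmarsh-circle} then supplies $\alpha\in\C$, $\alpha^n=1$, such that the character projection $\tilde f_\alpha:=\sum_{k\in\Z_n}\alpha^k S_{\frac{2\pi k}{n}}f$ vanishes on $U:=(\sup I-\frac{2\pi}{n},\inf I+\lambda)$, while $\tilde g_\alpha:=\sum_{k\in\Z_n}\alpha^k S_{\frac{2\pi k}{n}}g$ satisfies $\inf\supp\tilde g_\alpha\at{V}=\inf J$ on $V:=(\sup J-\frac{2\pi}{n},\inf J+\lambda)$. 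When $p=2$ we have $J=I$, $V=U$, and $\tilde g_\alpha=\tilde f_\alpha$, so the two clauses are directly inconsistent; henceforth assume $p\ge 3$.

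The rest relies on two elementary facts. First, the $\Z_n$-covariance $S_{\frac{2\pi}{n}}\tilde f_\alpha=\alpha^{-1}\tilde f_\alpha$ propagates the vanishing of $\tilde f_\alpha$ from $U$ to all of $\mathscr{R}_n(U)$; combined with $\supp\tilde f_\alpha\subset\mathscr{R}_n(I)$, this forces $\tilde f_\alpha\equiv 0$ when $\lambda>|I|$, and otherwise the minimal interval $I_\alpha$ of $\tilde f_\alpha$ satisfies $I_\alpha\subset[\inf I+\lambda,\sup I]$. Second, expanding the double sum and using character orthogonality on $\Z_n$ gives $\tilde f_\gamma\ast\tilde f_\delta=0$ for $\gamma\neq\delta$ and $\tilde f_\gamma\ast\tilde f_\gamma=n\,\tilde{(f\ast f)}_\gamma$; induction on $m$ then yields $\tilde{(f^{\ast m})}_\gamma=n^{-(m-1)}\tilde f_\gamma^{\,\ast m}$, and in particular $\tilde g_\alpha=n^{-(p-2)}\tilde f_\alpha^{\,\ast(p-1)}$.

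If $\tilde f_\alpha\equiv 0$, then $\tilde g_\alpha=0$, contradicting $\inf\supp\tilde g_\alpha\at{V}=\inf J$. Otherwise the inductive hypothesis applies to $\tilde f_\alpha$ (its minimal interval satisfies $|I_\alpha|\le|I|<2\pi/((p-1)n)$), so the minimal interval of $\tilde f_\alpha^{\,\ast(p-1)}$ is $(p-1)I_\alpha$, with left endpoint at least $(p-1)(\inf I+\lambda)>(p-1)\inf I+\lambda=\sup V$ when $p\ge 3$; a direct check using $I_\alpha\subset I$ shows no other $\Z_n$-translate of $(p-1)I_\alpha$ meets $V$ either, so $\tilde g_\alpha\at{V}=0$, again contradicting $\inf\supp\tilde g_\alpha\at{V}=\inf J$. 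The main obstacle is this last step: the inductive hypothesis must be applied to the character projection $\tilde f_\alpha$ (whose minimal interval has been trimmed by $\lambda$ on the left) rather than to $f$ itself, and the factor $(p-1)$ multiplying $\lambda$ after taking the convolution power is exactly what pushes the support past $\sup V$ when $p\ge 3$.
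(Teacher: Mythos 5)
Your argument is correct in substance, but it follows a genuinely different route from the paper. The paper deliberately does \emph{not} derive Theorem~\ref{theorem-titchmarsh-powers} from Theorem~\ref{theorem-titchmarsh-circle}: it gives a direct, non-inductive proof, decomposing $f^{\ast p}$ into the pieces $h_j$ of \eqref{hff}, applying the classical line version \eqref{titchmarsh-theorem} to the identity $\big(\sum_j\alpha^j f_j\big)^{\ast p}=\sum_j\alpha^j h_j$, and choosing $\alpha$ by Lemma~\ref{lemma-ssj} so that $\inf\supp\sum_j\alpha^j f_j=\inf I$; this yields $\min_j\inf\supp h_j=p\,\inf I$ (and the mirror statement for $\sup$) in a few lines. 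The remark preceding that proof warns that reducing to Theorem~\ref{theorem-titchmarsh-circle} by repeated doubling degrades the hypothesis to $\abs{I}<2\pi/(2^N n)$; your splitting $f^{\ast p}=f\ast f^{\ast(p-1)}$ sidesteps this, since $\abs{I}+\abs{(p-1)I}=p\abs{I}<2\pi/n$ is exactly \eqref{i-j-small} under the stated hypothesis, and the induction at level $p-1$ legitimately supplies the minimality of $J=(p-1)I$. The price is that Theorem~\ref{theorem-titchmarsh-circle} only returns the structural alternative \eqref{asbs1}, and you must convert it into a contradiction: this requires the covariance $S_{2\pi/n}\tilde f_\alpha=\alpha^{-1}\tilde f_\alpha$, the orthogonality relations $\tilde f_\gamma\ast\tilde f_\delta=0$ for $\gamma\ne\delta$ together with $\widetilde{(f^{\ast m})}_\gamma=n^{-(m-1)}\tilde f_\gamma^{\,\ast m}$, and the support bookkeeping showing $\mathscr{R}_n\big((p-1)I_\alpha\big)\cap V=\emptyset$; all of these check out. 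Two small tidying points: in the last step you do not actually need the inductive hypothesis applied to $\tilde f_\alpha$ --- the trivial containment $\supp\tilde f_\alpha^{\,\ast(p-1)}\subset\mathscr{R}_n\big((p-1)I_\alpha\big)$ with $\inf I_\alpha\ge\inf I+\lambda$ already pushes $\supp\tilde g_\alpha$ past $\sup V$ when $p\ge 3$ --- and the case $n=1$, which Theorem~\ref{theorem-titchmarsh-circle} excludes by assuming $n\ge 2$, must be dispatched separately (it is just the classical Titchmarsh theorem). Net effect: your proof is valid and shows the reduction to Theorem~\ref{theorem-titchmarsh-circle} can be made to work; the paper's independent proof is shorter because, once the character sums are formed, it never leaves the real line.
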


Above, we used the notations
$pI=\underbrace{I+\dots+I}\sb{p}$
and
$f\sp{\ast p}=\underbrace{f\ast\dots\ast f}\sb{p}$.

\section{Proofs}

First, we prove the following lemma.

\begin{lemma}\label{lemma-ssj}
Let $f\sb j\in\mathscr{E}'(I)$,
$j\in\Z\sb n$.
There is $\alpha\in\C$, $\alpha^n=1$,
such that
\begin{equation}\label{in-lemma-ssj}
\inf\supp\sum\sb{j\in\Z\sb n}\alpha^j f\sb j
=\min\limits\sb{j\in\Z\sb n}
\inf\supp f\sb j.
\end{equation}
\end{lemma}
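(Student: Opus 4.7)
The plan is as follows. I will set $m\sb j := \inf\supp f\sb j$ for $j\in\Z\sb n$, viewing $I$ as a sub-interval of $\R$ (legitimate provided $\abs{I}<2\pi$, which one may always assume by restricting if necessary), and set $m := \min\sb{j\in\Z\sb n} m\sb j$. The ``easy'' inequality
\[
\inf\supp\Big(\sum\sb{j\in\Z\sb n}\alpha^j f\sb j\Big)\geq m
\]
is automatic for every $\alpha\in\C$, since $\supp\sum\sb j\alpha^j f\sb j\subset\bigcup\sb j\supp f\sb j$. The content of the lemma is that for at least one $n$-th root of unity $\alpha$ this inequality is an equality.

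To establish the reverse inequality for some such $\alpha$, I would argue by contradiction. Suppose that for every $\alpha$ with $\alpha^n=1$ the inequality above is strict. Since the set of such $\alpha$ is finite (of cardinality $n$), one can choose a single $\eps>0$ small enough that $\sum\sb j\alpha^j f\sb j$ vanishes on $(-\infty,\,m+\eps)\cap I$ simultaneously for \emph{every} $n$-th root of unity $\alpha$. Next, I would invoke the discrete orthogonality relation on the cyclic group $\Z\sb n$,
\[
\sum\sb{\alpha\sothat\alpha^n=1}\alpha^{\ell}=\begin{cases}n,&\ell\equiv 0\pmod n,\\ 0,&\text{otherwise.}\end{cases}
\]
Multiplying $\sum\sb j\alpha^j f\sb j$ by $\alpha^{-k}$ and summing over all $\alpha$ with $\alpha^n=1$ then recovers $n f\sb k$, so each $f\sb k$ itself vanishes on $(-\infty,\,m+\eps)\cap I$. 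This contradicts the definition of $m$: there exists at least one $k\sb 0\in\Z\sb n$ with $\inf\supp f\sb{k\sb 0}=m$, and by definition of infimum of support such an $f\sb{k\sb 0}$ cannot vanish on any right-neighborhood of $m$.

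The argument is genuinely elementary and I do not anticipate a significant obstacle: it amounts to finite Fourier inversion on $\Z\sb n$ together with the defining property of $\inf\supp$. The one point deserving attention is making $\inf$ unambiguous on the circle $\T$, which requires fixing a lift of $I$ to $\R$; this is safe precisely because $I$ is a proper sub-interval of $\T$, a hypothesis that is satisfied in every application of the lemma in this paper.
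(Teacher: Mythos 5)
Your proof is correct and takes essentially the same route as the paper's: argue by contradiction, choose one $\epsilon>0$ that works simultaneously for all $n$ roots of unity, and invert the resulting linear system to conclude that every $f\sb j$ vanishes near $a=\min\sb j\inf\supp f\sb j$, contradicting the definition of $a$. The only cosmetic difference is that you invert the system via the orthogonality relations for characters of $\Z\sb n$ (discrete Fourier inversion), while the paper observes that the corresponding Vandermonde determinant at the roots of unity is nonzero --- it is the same matrix being inverted.
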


\begin{proof}
Denote
$a:=\min\limits\sb{j\in\Z\sb n}\inf\supp f\sb j$.
Let us assume that,
contrary to the statement of the lemma,
there is $\epsilon>0$
such that
$
\inf\supp\sum\sb{j\in\Z\sb n}\alpha^j f\sb j\ge a+\epsilon,
$
for any $\alpha=\gamma^m$,
where $\gamma=\exp(\frac{2\pi i}{n})$
and $m\in\N$, $1\le m\le n$.
Then
for any test function $\varphi\in\mathscr{D}(\R)$
with $\supp\varphi\subset(a-\epsilon,a+\epsilon)$
we would have:
\begin{equation}\label{phi-gamma-f}
0=\langle\varphi,\sum\sb{j\in\Z\sb n}\gamma^{jm} f\sb j\rangle
=\sum\sb{j\in\Z\sb n}\gamma^{jm}\langle\varphi,f\sb j\rangle,
\qquad
1\le m\le n.
\end{equation}
Using the formula for the Vandermonde determinant,
we have
\begin{equation}\label{vdmd}
\det
\left[
\begin{matrix}
1&\gamma&\gamma^2&\cdots&\gamma^{n-1}
\\
1&\gamma^2&\gamma^4&\cdots&\gamma^{2(n-1)}
\\
1&\gamma^3&\gamma^6&\cdots&\gamma^{3(n-1)}
\\
\vdots&\vdots&\vdots&\ddots&\vdots
\\
1&\gamma^n&\gamma^{2n}&\cdots&\gamma^{n(n-1)}
\end{matrix}
\right]
=\prod\sb{1\le j<k\le n}(\gamma^k-\gamma^j)\ne 0.
\end{equation}
Hence, \eqref{phi-gamma-f} implies that
$\langle\varphi,f\sb j\rangle=0$ for all $j\in\Z\sb n$.
Due to arbitrariness of $\varphi$,
this leads to
$f\sb j\at{(a-\epsilon,a+\epsilon)}=0$
for all $j\in\Z\sb n$,
leading to a contradiction
with the definition of $a$.
\end{proof}

\begin{proof}[Proof of Theorem~\ref{theorem-titchmarsh-circle}]
One has
$\supp f\subset \mathscr{R}_n(I)$,
$\supp g\subset \mathscr{R}_n(J)$,
$\supp f\ast g\subset\mathscr{R}_n(K)\subset\mathscr{R}_n(I+J)$.
Due to the restriction
\eqref{i-j-small},
the sets
$\mathscr{R}_n(I)$, $\mathscr{R}_n(J)$,
and $\mathscr{R}_n(I+J)$
each consist of $n$ non-intersecting intervals.
For $j\in\Z\sb n$,
let us set
$f\sb j=(S\sb{\frac{2\pi j}{n}}f)\at{I}\in\mathscr{E}'(I)$,
$g\sb j=(S\sb{\frac{2\pi j}{n}}g)\at{J}\in\mathscr{E}'(J)$,
$h\sb j=\big(S\sb{\frac{2\pi j}{n}}(f\ast g)\big)\at{K}\in\mathscr{E}'(I+J)$;
then
\begin{equation}\label{hfg}
h\sb j
=
\big(S\sb{\frac{2\pi j}{n}}(f\ast g)\big)\At{I+J}
=
\sum\sb{\stackrel{\scriptstyle k+l=j\!\!\mod n}{\scriptstyle k,\,l\in\Z\sb n}}
(S\sb{\frac{2\pi k}{n}}f)\at{I}
\ast(S\sb{\frac{2\pi l}{n}}g)\at{J}
=
\sum\sb{\stackrel{\scriptstyle k+l=j\!\!\mod n}{\scriptstyle k,\,l\in\Z\sb n}}
f\sb k\ast g\sb l,
\qquad
j\in\Z\sb n.
\end{equation}
Using the relation \eqref{hfg},
for any $\alpha\in\C$ such that $\alpha^n=1$
we have:
\begin{equation}\label{wh}
\Big(
\sum\sb{k\in\Z\sb n}\alpha^k f\sb k
\Big)
\ast
\Big(\sum\sb{l\in\Z\sb n}\alpha^l g\sb l
\Big)
=\sum\sb{j\in\Z\sb n}
\alpha^j
\Big[
\sum\sb{\stackrel{\scriptstyle k+l=j\!\!\mod n}{\scriptstyle k,\,l\in\Z\sb n}}
f\sb k\ast g\sb l
\Big]
=
\sum\sb{j\in\Z\sb n}\alpha^j h\sb j.
\end{equation}
Applying the Titchmarsh convolution theorem \eqref{titchmarsh-theorem}
to this relation,
we obtain:
\begin{equation}\label{isf-isg}
\inf\supp
\sum\sb{j\in\Z\sb n}\alpha^j f\sb j
+
\inf\supp
\sum\sb{j\in\Z\sb n}\alpha^j g\sb j
=
\inf\supp
\sum\sb{j\in\Z\sb n}\alpha^j h\sb j
\ge\inf K,
\end{equation}
where we took into account that
$\min\sb{j\in\Z\sb n}\inf\supp h\sb j\ge\inf K$.
By Lemma~\ref{lemma-ssj},
there is $\alpha\in\C$, $\alpha^n=1$,
such that
$
\inf\supp\sum\sb{j\in\Z\sb n}\alpha^j g\sb j
=
\min\limits\sb{j\in\Z\sb n}
\inf\supp g\sb j
=\inf J$;
this is equivalent to the second relation in \eqref{asbs1}.
For this value of $\alpha$,
\eqref{isf-isg} yields:
\[
\inf\supp\sum\sb{j\in\Z\sb n}\alpha^j f\sb j
\ge\inf K-\inf J=\inf I+\lambda.
\]
This is equivalent to the first relation in \eqref{asbs1}.
According to Remark~\ref{remark-th1},
this finishes the proof.
\end{proof}

\begin{proof}[Proof of Theorem~\ref{theorem-titchmarsh-weird}]
If $I$ consists of one point,
$I=\{p\}\subset(-\pi/2,\pi/2)$,
then
$\supp f=\mathscr{R}\sb 2(p)=\{p;\pi+p\}$,
and \eqref{fab1} holds with
\[
\mu=\frac{f+S\sb\pi f}{2}\At{I},
\quad
\nu=\frac{f-S\sb\pi f}{2}\At{I}.
\]
Now we assume that
$\abs{I}>0$.
Define $J=-I$ and $K=\{0\}\subset I+J$.
Then
$\supp f\sp\sharp\subset\mathscr{R}\sb 2(J)$
and there is no $J'\subsetneq J$
such that
$\supp f\sp\sharp\subset\mathscr{R}\sb 2(J')$.
According to the conditions of the theorem,
$\supp f\ast f\sp\sharp\subset\mathscr{R}\sb 2(K)$;
hence, one has:
\begin{equation}
\lambda:=\inf K-\inf I-\inf J=\sup I-\inf I=\abs{I}>0.
\label{ff1}
\end{equation}
Applying Theorem~\ref{theorem-titchmarsh-circle}
to \eqref{ff1}, we conclude that
there is $\alpha\in\{\pm 1\}$
such that
\begin{equation}\label{asdf1}
(f+\alpha S\sb{\pi}f)\at{(\sup I-\pi,\sup I)}=0
\end{equation}
and also
$\inf\supp(f\sp\sharp+\alpha S\sb{\pi}f\sp\sharp)\at{(-\frac{\pi}{2},\frac{\pi}{2})}
=
-\sup I$;
this last relation implies that
\begin{equation}\label{asdf1a}
\sup\supp(f+\alpha S\sb{\pi}f)\at{(-\frac{\pi}{2},\frac{\pi}{2})}
=
\sup I.
\end{equation}
Similarly, by Theorem~\ref{theorem-titchmarsh-circle},
there is $\beta\in\{\pm 1\}$
such that
$
(f\sp\sharp+\beta S\sb{\pi}f\sp\sharp)\at{(-\inf I-\pi,-\inf I)}=0,
$
hence
\begin{equation}\label{asdf3a}
(f+\beta S\sb{\pi}f)\at{(\inf I,\inf I+\pi)}=0.
\end{equation}
Comparing \eqref{asdf1a} with \eqref{asdf3a},
we conclude that $\alpha\ne\beta$, hence $\alpha=-\beta$;
then \eqref{asdf1} and \eqref{asdf3a}
allow us to conclude that both $f$ and $S\sb\pi f$
vanish on $(\inf I,\sup I)$,
hence
\[
\supp f\subset\{\inf I;\sup I;\pi+\inf I;\pi+\sup I\}.
\]
By \eqref{asdf1} and \eqref{asdf3a},
if $\alpha=1$,
the relation \eqref{fab1} holds
with $\mu=f\at{(\inf I,\pi/2)}$
and $\nu=f\at{(-\pi/2,\sup I)}$.
If instead $\alpha=-1$,
the relation \eqref{fab1} holds
with $\mu=f\at{(-\pi/2,\sup I)}$
and $\nu=f\at{(\inf I,\pi/2)}$.
\end{proof}

Let us notice that the proof
of Theorem~\ref{theorem-titchmarsh-powers}
for the case $p=2$
immediately follows from Theorem~\ref{theorem-titchmarsh-circle}.
(For example, the relations \eqref{asbs1} with $f=g$
are mutually contradictory unless $\lambda=0$.)
By induction,
this also gives the proof for $p=2^N$, with any $N\in\N$.
Then one can deduce the statement of
Theorem~\ref{theorem-titchmarsh-powers}
for any $p\le 2^N$, but under the condition
$\abs{I}<\frac{2\pi}{2^N n}$,
which is stronger than
$\abs{I}<\frac{2\pi}{p n}$.
Instead of trying to use Theorem~\ref{theorem-titchmarsh-circle},
we give an independent proof.

\begin{proof}[Proof of Theorem~\ref{theorem-titchmarsh-powers}]

One has
$\supp f\sp{\ast p}\subset\mathscr{R}_n(p I)$.
Due to the smallness of $I$,
both $\mathscr{R}_n(I)$ and $\mathscr{R}_n(p I)$
are collections of $n$ non-intersecting
intervals.
Define $f\sb j:=(S\sb{\frac{2\pi j}{n}}f)\at{I}\in\mathscr{E}'(I)$
and
$h\sb j:=
\big(S\sb{\frac{2\pi j}{n}}(f\sp{\ast p})\big)
\at{I}\in\mathscr{E}'(I)$.
Then
\begin{equation}\label{hff}
h\sb j
=\big(S\sb{\frac{2\pi j}{n}}(f\sp{\ast p})\big)\At{pI}
=
\!\!\!\!
\sum\sb{
\stackrel
{\scriptstyle j\sb 1+\dots+j\sb p=j\!\!\mod n}
{\scriptstyle j\sb 1,\,\dots,\,j\sb p\in\Z\sb n}}
\!\!\!\!
(S\sb{\frac{2\pi j\sb 1}{n}}f)\at{I}\ast
\dots\ast(S\sb{\frac{2\pi j\sb p}{n}}f)\at{I}
=
\!\!\!\!
\sum\sb{
\stackrel
{\scriptstyle j\sb 1+\dots+j\sb p=j\!\!\mod n}
{\scriptstyle j\sb 1,\,\dots,\,j\sb p\in\Z\sb n}}
\!\!\!\!
f\sb{j\sb 1}\ast\dots\ast f\sb{j\sb p},
\qquad
j\in\Z\sb n.
\end{equation}
Taking into account \eqref{hff},
for any $\alpha\in\C$ such that $\alpha^n=1$
one has:
\begin{equation}\label{ffhj}
\Big(\sum\sb{j\in\Z\sb n}\alpha^j f\sb j\Big)\sp{\ast p}
=
\sum\sb{j\in\Z\sb n}
\alpha^j
\Big[
\sum\sb{j\sb 1+\dots+j\sb p=j\!\!\mod n}
f\sb{j\sb 1}\ast\dots\ast f\sb{j\sb n}
\Big]
=
\sum\sb{j\in\Z\sb n}
\alpha^j
h\sb j.
\end{equation}
Now we apply the Titchmarsh convolution theorem
to \eqref{ffhj}, getting
\[
p\inf\supp\sum\sb{j\in\Z\sb n}\alpha^j f\sb j
=\inf\supp\sum\sb{j\in\Z\sb n}\alpha^j h\sb j.
\]
By Lemma~\ref{lemma-ssj},
there is $\alpha\in\C$, $\alpha^n=1$, such that
$
\inf\supp\sum\sb{j\in\Z\sb n}\alpha^j f\sb j
=\min\limits\sb{j\in\Z\sb n}
\inf\supp f\sb j$,
hence,
for this value of $\alpha$,
\[
p\min\limits\sb{j\in\Z\sb n}\inf\supp f\sb j
=\inf\supp\sum\sb{j\in\Z\sb n}\alpha^j h\sb j
\ge\min\limits\sb{j\in\Z\sb n}\inf\supp h\sb j.
\]
On the other hand,
\eqref{hff} immediately yields the inequalities
$\inf\supp h\sb j\ge p\min\limits\sb{k\in\Z\sb n}\inf\supp f\sb k$,
for any $j\in\Z\sb n$.
It follows that
$\min\limits\sb{j\in\Z\sb n}\inf\supp h\sb j=p\min\limits\sb{j\in\Z\sb n}\inf\supp f\sb j$
and similarly
$\max\limits\sb{j\in\Z\sb n}\sup\supp h\sb j=p\max\limits\sb{j\in\Z\sb n}\sup\supp f\sb j$.
\end{proof}

\hbadness=10000

\def\cprime{$'$} \def\cprime{$'$} \def\cprime{$'$} \def\cprime{$'$}
  \def\cprime{$'$} \def\cprime{$'$} \def\cprime{$'$} \def\cprime{$'$}
  \def\cprime{$'$} \def\cprime{$'$} \def\cprime{$'$}

\end{document}